\documentclass[12pt]{amsart}
\usepackage{amsmath,amssymb,amsfonts,amsthm}
\usepackage[all,cmtip]{xy}
\usepackage{fullpage}
\usepackage[francais,english]{babel}
\begin{document}
\theoremstyle{plain}
\newtheorem{thm}{Theorem}[section]
\newtheorem{lem}[thm]{Lemma}
\newtheorem{cor}[thm]{Corollary}
\newtheorem{prop}[thm]{Proposition}
\newtheorem{rem}[thm]{Remark}
\newtheorem{defn}[thm]{Definition}
\newtheorem{ex}[thm]{Example}
\newtheorem{ques}[thm]{Question}
\newtheorem{fact}[thm]{Fact}
\newtheorem{conj}[thm]{Conjecture}
\numberwithin{equation}{subsection}
\def\theequation{\thesection.\arabic{equation}}
\newcommand{\mc}{\mathcal}
\newcommand{\mb}{\mathbb}
\newcommand{\surj}{\twoheadrightarrow}
\newcommand{\inj}{\hookrightarrow}
\newcommand{\zar}{{\rm zar}}
\newcommand{\an}{{\rm an}} 
\newcommand{\red}{{\rm red}}
\newcommand{\codim}{{\rm codim}}
\newcommand{\rank}{{\rm rank}}
\newcommand{\Ker}{{\rm Ker \,}}
\newcommand{\Pic}{{\rm Pic}}
\newcommand{\Div}{{\rm Div}}
\newcommand{\Hom}{{\rm Hom}}
\newcommand{\im}{{\rm im \,}}
\newcommand{\Spec}{{\rm Spec \,}}
\newcommand{\Sing}{{\rm Sing}}
\newcommand{\Char}{{\rm char}}
\newcommand{\Tr}{{\rm Tr}}
\newcommand{\Gal}{{\rm Gal}}
\newcommand{\Min}{{\rm Min \ }}
\newcommand{\Max}{{\rm Max \ }}
\newcommand{\CH}{{\rm CH}}
\newcommand{\pr}{{\rm pr}}
\newcommand{\cl}{{\rm cl}}
\newcommand{\gr}{{\rm Gr }}
\newcommand{\Coker}{{\rm Coker \,}}
\newcommand{\id}{{\rm id}}
\newcommand{\Rep}{{\bold {Rep} \,}}
\newcommand{\Aut}{{\rm Aut}}
\newcommand{\GL}{{\rm GL}}
\newcommand{\Bl}{{\rm Bl}}
\newcommand{\Jab}{{\rm Jab}}
\newcommand{\alb}{\rm Alb}
\newcommand{\NS}{\rm NS}
\newcommand{\sA}{{\mathcal A}}
\newcommand{\sB}{{\mathcal B}}
\newcommand{\sC}{{\mathcal C}}
\newcommand{\sD}{{\mathcal D}}
\newcommand{\sE}{{\mathcal E}}
\newcommand{\sF}{{\mathcal F}}
\newcommand{\sG}{{\mathcal G}}
\newcommand{\sH}{{\mathcal H}}
\newcommand{\sI}{{\mathcal I}}
\newcommand{\sJ}{{\mathcal J}}
\newcommand{\sK}{{\mathcal K}}
\newcommand{\sL}{{\mathcal L}}
\newcommand{\sM}{{\mathcal M}}
\newcommand{\sN}{{\mathcal N}}
\newcommand{\sO}{{\mathcal O}}
\newcommand{\sP}{{\mathcal P}}
\newcommand{\sQ}{{\mathcal Q}}
\newcommand{\sR}{{\mathcal R}}
\newcommand{\sS}{{\mathcal S}}
\newcommand{\sT}{{\mathcal T}}
\newcommand{\sU}{{\mathcal U}}
\newcommand{\sV}{{\mathcal V}}
\newcommand{\sW}{{\mathcal W}}
\newcommand{\sX}{{\mathcal X}}
\newcommand{\sY}{{\mathcal Y}}
\newcommand{\sZ}{{\mathcal Z}}
\newcommand{\A}{{\mathbb A}}
\newcommand{\B}{{\mathbb B}}
\newcommand{\C}{{\mathbb C}}
\newcommand{\D}{{\mathbb D}}
\newcommand{\E}{{\mathbb E}}
\newcommand{\F}{{\mathbb F}}
\newcommand{\G}{{\mathbb G}}
\renewcommand{\H}{{\mathbb H}}
\newcommand{\I}{{\mathbb I}}
\newcommand{\J}{{\mathbb J}}
\newcommand{\M}{{\mathbb M}}
\newcommand{\N}{{\mathbb N}}
\renewcommand{\P}{{\mathbb P}}
\newcommand{\Q}{{\mathbb Q}}
\newcommand{\R}{{\mathbb R}}
\newcommand{\T}{{\mathbb T}}
\newcommand{\V}{{\mathbb V}}
\newcommand{\W}{{\mathbb W}}
\newcommand{\X}{{\mathbb X}}
\newcommand{\Y}{{\mathbb Y}}
\newcommand{\Z}{{\mathbb Z}}
\newcommand{\Nwt}{{\rm Nwt}}
\newcommand{\Hdg}{{\rm Hdg}}
\newcommand{\ind}{{\rm ind \,}}
\newcommand{\Br}{{\rm Br}}
\newcommand{\inv}{{\rm inv}}
\newcommand{\Nm}{{\rm Nm}}
\newcommand{\Griff}{{\rm Griff}}
\newcommand{\Image}{\rm Im \,}
\newcommand{\Ev}{\rm Ev \,}
\title[Hypersurfaces]{Unramified cohomology, $\A^1$-connectedness, and Chevalley-Warning problem in Grothendieck Ring}
\author{{Nguyen Le Dang Thi}}
\address{Mathematik, Universit\"at Duisburg-Essen, 45117 Essen, Germany}
\thanks{ This work has been supported by SFB/TR45 "Periods, moduli spaces and arithmetic of algebraic varieties"}
\email{le.nguyen@uni-due.de}
\date{28. 02. 2012}          
\subjclass{14F22, 14F42}
\keywords{Unramified cohomology, Brauer group, Grothendieck Ring, Birational invariant, $\A^1$-connectedness}
\begin{abstract}
We study the Chevalley-Warning problem in the Grothendieck ring $K_0(Var/k)$. We show that the $\A^1$-homotopy theory yields well defined invariants on $K_0(Var/k)/\bold{L}$, in particular the Brauer group is such an invariant. We use this to give a concrete counter-example to the Chevalley-Warning conjecture over a $C_1$-field \cite{BS11}. This also gives a negative answer to the question in \cite[Ques. 3.8]{Bil11}. \\ \\    
\selectlanguage{francais}
\noindent
\textbf{R\'esum\'e:} \textbf{Cohomologie non ramifi\'ee, $\A^1$-connexit\'e et le probl\`eme de Chevalley-Warning dans l'anneau de Grothendieck.} Nous \'etudions le probl\`eme de Chevalley-Warning dans l'anneau de Grothendieck
$K_0 (Var/k) $. Nous montrons que la th\'eorie $\A^1$-homotopie fournit des invariants sur $K_0(Var/k) /\bold{L}$. En particulier le groupe de Brauer est un tel invariant. Nous utilisons cela pour donner un contre-exemple concret \`a la
conjecture de Chevalley-Warning sur un corps $C_1$ \cite{BS11}. Cela donne aussi une r\'eponse n\'egative \`a la question dans \cite[Ques. 3.8]{Bil11}.
\end{abstract}
\maketitle
\section{Introduction}
Let $k$ be a field and $Var/k$ be the category of varieties over $k$. We denote by $K_0(Var/k)$ the Grothendieck ring of varieties over $k$. Over a finite field $k=\F_q$, the Chevalley-Warning theorem (cf. \cite{Ax64}) states that a projective hypersurface $X \subset \P^n$ of degree $d \leq n$ satisfies the congruence formula 
\begin{equation}\label{eqcongruence}
|X(\F_q)| \equiv 1 \mod q.  
\end{equation}
The counting point $X \mapsto |X(\F_q)|$ gives rise to a ring homomorphism 
$$|-|:  K_0(Var/\F_q) \rightarrow \Z, $$
from which one may reformulate the congruence formula \ref{eqcongruence} as $|[X]| \equiv 1 \mod |\bold{L}|$, where we denote by $\bold{L}$ the class of the affine line $[\A^1]$ in $K_0(Var/\F_q)$. The geometric Chevalley-Warning problem for smooth projective hypersurfaces concerns with the following question: 
\begin{ques}\label{quesCW}
Let $k$ be a field and $X \subset \P^n$ be a smooth hypersurface of degree $\leq n$ such that $X(k) \neq \emptyset$. Whether is it true that $[X] \equiv \bold{1} \mod \bold{L}$ in $K_0(Var/k)$, where $\bold{1} = [\Spec k]$?  
\end{ques}
In \cite[3.3]{BS11}, F. Brown and O. Schnetz conjectured that the question \ref{quesCW} is always true for $C_1$-fields. The question \ref{quesCW} over an arbitrary field $k$ is due to H. Esnault in general for the relationship between rational points and the Grothendieck ring $K_0(Var/k)$ (cf. \cite[Ques. 3.7]{Bil11}). In \cite{Li11} the question \ref{quesCW} is formulated over algebraically closed fields of characteristic $0$ under the name geometric Chevalley-Warning conjecture. Some cases, where the question \ref{quesCW} has an affirmative answer for singular hypersurfaces, were worked out in \cite{Bil11} and \cite{Li11}. Using Brauer group, which yields a well defined invariant on $K_0(Var/k)/\bold{L}$, we give a counter-example to the  conjecture of Brown and Schnetz over non-algebraically closed $C_1$-fields.
\begin{thm}\label{thm1}
Let $X$ be a smooth projective geometrically integral variety over a field $k$ of characteristic $0$. If $[X] \equiv \bold{1} \mod \bold{L}$, then $Br(X) \cong Br(k)$. 
\end{thm}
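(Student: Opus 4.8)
The plan is to convert the Grothendieck-ring hypothesis into the geometric statement that $X$ is stably rational, and then to use that the Brauer group of a smooth proper variety is a stable birational invariant --- the latter being the prototypical example of the $\A^1$-homotopy / unramified-cohomology invariants on $K_0(Var/k)/\bold{L}$ mentioned above.

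First I would pass from $K_0$ to birational geometry. Since $\Char k = 0$, resolution of singularities is available, so $K_0(Var/k)$ admits Bittner's presentation by classes of smooth projective varieties modulo blow-up relations, and hence one gets the Larsen--Lunts ``motivic measure'' $\mu\colon K_0(Var/k)\to \Z[SB_k]$ into the monoid ring on the monoid $SB_k$ of stable birational classes of smooth proper integral $k$-varieties, characterised by $\mu([Y]) = [Y]_{sb}$ for $Y$ smooth proper; in particular $\mu(\bold{L}) = \mu([\P^1]) - \mu([\Spec k]) = 0$, so $\mu$ factors through $K_0(Var/k)/\bold{L}$. The hypothesis $[X]\equiv\bold{1}\bmod\bold{L}$ then gives $[X]_{sb} = \mu([X]) = \mu(\bold{1}) = [\Spec k]_{sb}$ in $\Z[SB_k]$, and since $\Z[SB_k]$ is $\Z$-free on the set $SB_k$, this forces $X$ to be stably birational to $\Spec k$, i.e.\ $X\times_k\P^m_k$ is birational to $\P^n_k$ for some $m, n$.

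Next I would invoke the stable birational invariance of $\Br$ for smooth proper varieties. For $X$ smooth, proper and integral, $\Br(X)$ is the unramified Brauer group of $k(X)/k$: Grothendieck's theorem gives an injection $\Br(X)\hookrightarrow\Br(k(X))$ because $X$ is regular, and purity (Auslander--Goldman in low dimension, Gabber in general) identifies its image with the classes unramified at every codimension-one point, equivalently at every divisorial valuation of $k(X)/k$ --- a subgroup depending only on the $k$-field $k(X)$. Hence $X\mapsto\Br(X)$ is a birational invariant of smooth proper $k$-varieties; together with $\P^1$-invariance of the Brauer group of regular schemes, $\Br(X\times_k\P^1_k)\cong\Br(X)$, it becomes a stable birational invariant. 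Applying this along the equivalence of the previous step, and noting it is compatible with the structure morphisms to $\Spec k$ (here geometric integrality guarantees $X$ is a genuine integral $k$-variety), yields $\Br(X)\cong\Br(\Spec k)=\Br(k)$, the isomorphism being induced by the canonical map $\Br(k)\to\Br(X)$.

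The hard part will be the second step: making rigorous that $\Br$ of a smooth proper variety depends only on its stable birational class. This is exactly where unramified cohomology genuinely enters --- one must combine Grothendieck's comparison $\Br(X)\hookrightarrow\Br(k(X))$ for regular $X$ with absolute purity in order to rewrite $\Br(X)$ purely in terms of the residues of $\Br(k(X))$ at codimension-one points, and then compare two arbitrary smooth proper models by a moving argument; it is this description, rather than anything special to hypersurfaces, that is responsible for $\Br$ descending to $K_0(Var/k)/\bold{L}$ and hence for the theorem.
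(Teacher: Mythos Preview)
Your proposal is correct and follows essentially the same approach as the paper: first use the Larsen--Lunts theorem to deduce from $[X]\equiv\bold{1}\bmod\bold{L}$ that $X$ is stably $k$-rational, then use that $\Br$ of a smooth proper variety coincides with the unramified Brauer group of its function field and is therefore a stable birational invariant, yielding $\Br(X)\cong\Br(\P^n_k)\cong\Br(k)$. The only differences are expository---you spell out Bittner's presentation and the purity argument in more detail, whereas the paper simply cites Grothendieck and Colliot-Th\'el\`ene--Ojanguren for the stable birational invariance.
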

The proof of \ref{thm1} is simple. By Koll\'ar-Larsen-Lunts theorem (cf. \cite{Ko05}, \cite{LL03}), one has $[X] \equiv \bold{1} \mod \bold{L}$ iff $X$ is stably $k$-rational. The fact that the Brauer group $Br(X)$ is a birational invariant is due to Grothendieck \cite[Cor. 7.3, p. 138]{Gro68}. Moreover, one has $Br( \P^n_X) \cong Br(X)$, because $Br(X)$ can be identified with the unramified Brauer group $Br_{nr}(k(X))$ from the exact sequence (cf. \cite[(3.9)]{CT95}) 
$$0 \rightarrow Br(X) \rightarrow Br(k(X)) \rightarrow \bigoplus_{x \in X^{(1)}}H^1_{\acute{e}t}(\kappa(x),\Q/\Z) $$
and the later group $Br_{nr}(k(X))$ gives us a stably birational invariance \cite{CTO89}.  So the theorem follows, since $Br(\P^n_k) \cong Br(k)$. In fact, theorem \ref{thm1} is a special case of a more general invariant coming from strictly $\A^1$-invariant sheaves (see Theorem \ref{thma} below). However, it is enough to produce a counter-example to the geometric Chevalley-Warning conjecture over non-algebraically closed $C_1$-fields. 
\begin{cor}\label{cor2}
Let $k$ be a non-algebraically closed field of $char(k) \neq 3$ and assume $k^{\times} \setminus (k^{\times})^3$ is not empty.  Let $X$ be a smooth cubic surface given by the equation
$$x_0^3+x_1^3+x_2^3+ax_3^3 = 0, $$
where $a \notin (k^{\times})^3$. Then $Br(X)/Br(k)$ is non-trivial. In particular, if $k$ is a non-algebraically closed $C_1$-field of characteristic $0$ with $k^{\times} \setminus (k^{\times})^3 \neq \emptyset$, then $[X]$ is not $\equiv \bold{1} \mod \bold{L}$.
\end{cor}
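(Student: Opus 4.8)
\emph{Proof sketch.} Observe first that $X$ carries the $k$-rational point $(1:-1:0:0)$ for every field $k$ as in the statement, since $1^3+(-1)^3=0$; in particular $X(k)\neq\emptyset$, so that Question~\ref{quesCW} genuinely applies to $X$ (here $X\subset\P^3$ has degree $3$), and the ``$C_1$'' and ``$\Char k=0$'' hypotheses serve only to make Corollary~\ref{cor2} a counterexample to that question and to the Brown--Schnetz conjecture. Granting the first assertion, the second follows from Theorem~\ref{thm1}: if $[X]\equiv\bold{1}\bmod\bold{L}$, then by Theorem~\ref{thm1} the natural map $\Br(k)\to\Br(X)$ is an isomorphism, hence $\Br(X)/\Br(k)=0$, contradicting the first assertion. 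So everything reduces to showing $\Br(X)\neq\Br(k)$. Now $X$ is smooth, since for $\Char k\neq3$ and $a\neq0$ the partial derivatives $3x_0^2,3x_1^2,3x_2^2,3ax_3^2$ have no common projective zero, and, being a smooth surface in $\P^3$, it is geometrically integral; over $\bar k$ it is a del Pezzo surface of degree $3$, hence $\bar k$-rational, so $\Br(X_{\bar k})=0$. By the Hochschild--Serre spectral sequence for $\G_m$, this vanishing together with the $k$-rational point found above yields
$$\Br(X)/\Br(k)\cong H^1\bigl(k,\Pic(X_{\bar k})\bigr),$$
so it remains to show this group is non-zero.

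The Galois action on $\Pic(X_{\bar k})$ factors through $G:=\Gal(k'/k)$, where $k':=k(\mu_3,a^{1/3})$: substituting $x_3\mapsto a^{-1/3}x_3$ turns the equation into the Fermat cubic $\sum x_i^3=0$, whose $27$ lines are cut out by pairs of linear forms $x_i+\zeta x_j$ with $\zeta\in\mu_3$, so all $27$ lines of $X$ are already defined over $k'$; since their classes generate $\Pic(X_{\bar k})$, we get $\Pic(X_{\bar k})=\Pic(X_{k'})$ as $G$-modules. Writing out how the generators of $G$ --- namely $a^{1/3}\mapsto\omega a^{1/3}$, together with $\omega\mapsto\omega^2$ when $\mu_3\not\subseteq k$ --- permute the $27$ lines identifies the image of $G$ in $W(E_6)$ (it is $\Z/3$ when $\mu_3\subseteq k$, of order $6$ in general) and its action on the rank-$7$ lattice $\Pic(X_{k'})$. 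A cocycle computation --- using, for instance, a $G$-equivariant presentation of $\Pic(X_{k'})$ by permutation modules attached to the lines or to the tritangent planes, or the analysis of diagonal cubic surfaces of Colliot-Th\'el\`ene--Kanevsky--Sansuc (compare also the cubic-surface tables of Manin and of Swinnerton-Dyer) --- then gives $H^1(k,\Pic(X_{\bar k}))\cong\Z/3$, which is non-zero. Concretely, a generator is represented by the Azumaya algebra over $X$ obtained from the degree-$3$ cyclic algebra on $k(X)$ attached to the cyclic extension $k(\mu_3,a^{1/3})/k(\mu_3)$ and a rational function of the shape $(x_0+x_1)/(x_0+x_2)$ --- symmetrized over $\Gal(k(\mu_3)/k)$ when $\mu_3\not\subseteq k$ --- and one checks that its residue along every prime divisor of $X$ vanishes, so that it lies in $\Br(X)$.

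The heart of the argument is the cohomology computation $H^1(k,\Pic(X_{\bar k}))=\Z/3$. When $\mu_3\subseteq k$ it is a short calculation with a rank-$7$ module over $\Z/3$. The genuinely delicate case is $\mu_3\not\subseteq k$, where the image of $G$ is an $S_3$: one must then determine the action of $\Gal(k(\mu_3)/k)=\Z/2$ on $H^1(k(\mu_3),\Pic(X_{\bar k}))=\Z/3$ and verify that the stable part is still all of $\Z/3$ --- equivalently, that the symmetrized cyclic algebra above really is unramified on $X$ and remains non-constant --- which in either formulation comes down to the incidence combinatorics of the $27$ lines, where the del Pezzo geometry genuinely enters. (One uses here that $a$ stays a non-cube over $k(\mu_3)$, as $[k(\mu_3):k]\le2$ is prime to $3$, so the computation over $k(\mu_3)$ is the generic one.) Once $H^1(k,\Pic(X_{\bar k}))\neq0$ is established, $\Br(X)\neq\Br(k)$, and the corollary follows as explained above.
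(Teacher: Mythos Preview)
Your argument is correct and proceeds along the same lines as the paper's proof: reduce the second assertion to the first via Theorem~\ref{thm1}, and establish the first by computing $\Br(X)/\Br(k)$ for this diagonal cubic surface. The paper does not carry out that computation but simply cites the literature --- \cite[Ex.~45.3]{Man86} and \cite[2.5.1]{CTS87} when $\mu_3\subset k$, and \cite[Prop.~2.1]{CTW11} when $\mu_3\not\subset k$ --- whereas you sketch the standard method behind those references (Hochschild--Serre plus the Galois action on the $27$ lines).

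One numerical correction: when $\mu_3\subset k$ the answer is $\Br(X)/\Br(k)\cong(\Z/3)^2$, not $\Z/3$. Indeed, each of the nine $\Z/3$-orbits of lines is a tritangent triple (the three lines in an orbit lie in a common plane $x_i+\zeta x_j=0$), so every orbit sum equals the hyperplane class $H$ in $\Pic$; it follows that $\Pic(X_{\bar k})^G=\Z H$ and that $\ker(1+\sigma+\sigma^2)$ is a free $\Z[\omega]$-module of rank~$3$, from which a short computation gives $\hat H^1(G,\Pic)\cong(\Z/3)^2$. Your explicit cyclic algebra furnishes one generator; a second independent one is needed and is written down in the cited references. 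When $\mu_3\not\subset k$ the $\Z/2$-action cuts $(\Z/3)^2$ down, and your value $\Z/3$ agrees with \cite[Prop.~2.1]{CTW11}. Either way the group is non-zero, so your conclusion stands; only the stated order in the $\mu_3\subset k$ case is off.
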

\begin{proof}
Obviously $X(k) \neq \emptyset$. If $k$ is a non-algebraically closed field with $char(k) \neq 3$ containing a primitive cubic root of unity, then for the smooth cubic surface as above one has $Br(X)/Br(k) = \Z/3 \oplus \Z/3$ (cf. \cite[Ex. 45.3]{Man86} for number fields and \cite[2.5.1]{CTS87} in general). If $k$ has no  primitive cubic roots of unity, the quotient $Br(X)/Br(k)$ is still non-trivial and it is described in \cite[Prop. 2.1]{CTW11}. This gives a negative answer to the question \ref{quesCW} as desired.
\end{proof}
Now let $k$ be an arbitrary field and let $\bold{Ho}_{\A^1}(k)$ be the $\A^1$-homotopy category constructed in \cite{MV01}. For a space $\sX \in \Delta^{op}Sh_{Nis}(Sm/k)$ let $\pi_0^{\A^1}(\sX)$ be the sheaf associated to the presheaf 
$$U \mapsto [U,\sX]_{\A^1} \stackrel{def}{=} \Hom_{\bold{Ho}_{\A^1}(k)}(U,\sX), $$
for $U \in Sm/k$. We say $\sX$ is $\A^1$-connected, if the canonical map $\sX \rightarrow \Spec k$ induces an isomorphism of sheaves $\pi_0^{\A^1}(\sX) \stackrel{\simeq}{\rightarrow} \pi_0^{\A^1}(\Spec k) =  \Spec k$, \cite{AM11}. Let $D_{\A^1}(k)$ denote the $\A^1$-derived category introduced by F. Morel (see e.g \cite[\S 5.2]{Mor12}). Let us denote by $\sA b_k^{\A^1}$ the category of strictly $\A^1$-invariant sheaves (cf. \cite[Def. 7, page 8]{Mor12} or \cite[Def. 4.3.1]{AM11}), it is known that $D_{\A^1}(k)$ has a homological $t$-structure and one can identify $\sA b_k^{\A^1}$ with the heart of this $t$-structure \cite[Lem. 6.2.11]{Mor05}. Thus $\sA b_k^{\A^1}$ is an abelian category by \cite[Thm. 1.3.6]{BBD82}. For a strictly $\A^1$-invariant sheaf $M$ and an irreducible smooth $k$-scheme $X$ we write $M^{nr}(X)$ for the group of unramified elements (\cite[Def. 4.1]{A11}). Now in the context of $\A^1$-derived category one can prove 
\begin{thm}\label{thma}
Let $k$ be a field of characteristic $0$. If $X,Y$ are two irreducible smooth projective $k$-varieties, such that $[X] = [Y]$ in $K_0(Var/k)/\bold{L}$, then $M(X) \cong M(Y)$ for any strictly $\A^1$-invariant sheaf $M \in \sA b_k^{\A^1}$, i.e. $M$ yields a well-defined invariant on $K_0(Var/k)/\bold{L}$. In particular, if $X$ is an integral smooth projective $k$-variety, whose class in $K_0(Var/k)$ satisfies $[X] \equiv \bold{1} \mod \bold{L}$, then $X$ is $\A^1$-connected, hence for any strictly $\A^1$-invariant sheaf $M \in \sA b_k^{\A^1}$ the canonical map $M(k) \rightarrow M^{nr}(X)$ is then a bijection, where $M^{nr}(X)$ denotes the group of unramified elements.
\end{thm}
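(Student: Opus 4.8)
The plan is to reduce both assertions to the stable birational invariance of the unramified functor $Z \mapsto M^{nr}(Z)$ attached to a strictly $\A^1$-invariant sheaf $M$, and then to combine this with the Larsen--Lunts presentation of $K_0(Var/k)/\bold{L}$. Since $\Char k = 0$, the theorem of Larsen--Lunts and Koll\'ar \cite{LL03,Ko05} identifies $K_0(Var/k)/\bold{L}$ with the free abelian group on stable birational classes of smooth projective $k$-varieties; concretely, for smooth projective integral $X, Y$ one has $[X] = [Y]$ in $K_0(Var/k)/\bold{L}$ if and only if $X \times \P^a$ is birational to $Y \times \P^b$ for some $a, b \geq 0$, and $[X] \equiv \bold{1} \mod \bold{L}$ if and only if $X$ is stably $k$-rational. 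So it is enough to establish: (i) $M^{nr}$ is a stable birational invariant of smooth proper integral varieties; (ii) $M(Z) \cong M^{nr}(Z)$ for $Z$ smooth proper integral; and (iii) a stably rational smooth proper $X$ is $\A^1$-connected.

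For (i), birational invariance is immediate from the definition: by \cite[Def. 4.1]{A11} the group $M^{nr}(Z)$ is the intersection $\bigcap_v M(\mathcal{O}_v)$ formed inside $M(k(Z))$ over the divisorial valuations $v$ of $k(Z)/k$, and this depends only on the field $k(Z)$. The substantial point is the projective-line homotopy invariance $M^{nr}(Z \times \P^1) \cong M^{nr}(Z)$, equivalently $M^{nr}(F(t)) = M^{nr}(F)$ for $F = k(Z)$; this is precisely where strict $\A^1$-invariance enters. The inclusion coming from pullback is clear; for the reverse one runs the argument of Colliot-Th\'el\`ene for the Brauer group and Galois cohomology, replacing the Faddeev/Bloch--Ogus sequence by Morel's Rost--Schmid complex of $M$ over $\P^1_F$. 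An element of $M(F(t))$ unramified at every closed point of $\A^1_F$ already lies in $M(\A^1_F) = M(F)$, by the unramified-sheaf property of $M$ applied to $\A^1_F$ together with the $\A^1$-invariance of $M$, and the additional requirement of being unramified at $t = \infty$ forces it to equal the pullback of its specialization at a rational point, via the residue map at infinity in the Rost--Schmid complex. Iterating yields $M^{nr}(Z \times \P^a) \cong M^{nr}(Z)$, which together with birational invariance gives stable birational invariance.

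For (ii), a strictly $\A^1$-invariant sheaf is an unramified sheaf in Morel's sense, so $M(Z) = \bigcap_{z \in Z^{(1)}} M(\mathcal{O}_{Z,z})$ inside $M(k(Z))$ for any smooth irreducible $Z$. When $Z$ is in addition proper, resolution of singularities in characteristic $0$ shows every divisorial valuation of $k(Z)/k$ has a center on some smooth projective model obtained from $Z$ by blow-ups with smooth centers, and the invariance of unramified sections under such blow-ups (a further consequence of the Rost--Schmid formalism, as in the Brauer-group case of \cite{CTO89}) identifies $M(Z)$ with $\bigcap_v M(\mathcal{O}_v) = M^{nr}(Z)$. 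Combining (i) and (ii): if $[X] = [Y]$ in $K_0(Var/k)/\bold{L}$ then $X$ and $Y$ are stably birational, so $M(X) \cong M^{nr}(X) \cong M^{nr}(Y) \cong M(Y)$, which is the first assertion.

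For (iii), if $[X] \equiv \bold{1} \mod \bold{L}$ then $X \times \P^m$ is birational to $\P^n$ for some $m, n$; both are smooth proper, so the birational invariance of $\pi_0^{\A^1}$ for smooth proper varieties \cite{AM11} gives $\pi_0^{\A^1}(X \times \P^m) \cong \pi_0^{\A^1}(\P^n) = \Spec k$, and the retraction $X \to X \times \P^m \to X$ given by a rational point of $\P^m$ followed by the projection exhibits $\pi_0^{\A^1}(X)$ as a retract of $\Spec k$, hence $\pi_0^{\A^1}(X) = \Spec k$: so $X$ is $\A^1$-connected. Since $M$ is strictly $\A^1$-invariant it is $\A^1$-local and $0$-truncated \cite{Mor05,Mor12}, whence for every space $\sX$ one has $[\sX, M]_{\A^1} = \Hom_{Sh_{Nis}(Sm/k)}(\pi_0^{\A^1}(\sX), M)$; applying this to $\sX = X$ and using $[X,M]_{\A^1} = M(X)$ together with (ii) gives $M^{nr}(X) = M(X) = \Hom(\pi_0^{\A^1}(X), M) = \Hom(\Spec k, M) = M(k)$, and unwinding the identifications this isomorphism is the canonical map $M(k) \to M^{nr}(X)$. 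I expect the projective-line homotopy invariance in (i) — that is, controlling the residue at infinity in the Rost--Schmid complex — to be the main obstacle, with the blow-up invariance in (ii) a close second; everything else is formal assembly from Larsen--Lunts and the elementary properties of $\pi_0^{\A^1}$ and of $\A^1$-local $0$-truncated objects.
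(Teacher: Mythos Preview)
Your argument is correct, but the route differs from the paper's. The paper does not prove stable birational invariance of $M^{nr}$ directly; instead it quotes \cite[Thm.~3.9]{A11} for the stable birational invariance of the zeroth $\A^1$-homology sheaf $\bold{H}_0^{\A^1}(-)$, and then the representing formula $H^0_{Nis}(X,M)=\Hom_{\sA b_k^{\A^1}}(\bold{H}_0^{\A^1}(X),M)$ \cite[Lem.~3.3]{A11} to transfer this to $M(X)$, with $M(X)=M^{nr}(X)$ coming from \cite[Lem.~4.2]{A11}. For $\A^1$-connectedness the paper passes through retract rationality \cite[Prop.~1.4]{CTS07} and $\A^1$-chain connectedness \cite[Thm.~2.3.6]{AM11}, \cite[Lem.~6.1.3]{Mor05}, rather than your retraction argument via $\pi_0^{\A^1}(X\times\P^m)\cong\pi_0^{\A^1}(\P^n)$. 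Your approach is more self-contained: you isolate exactly the two analytic inputs (homotopy invariance of $M^{nr}$ along $\P^1$ via the Rost--Schmid complex, and blow-up invariance) and make the rest formal, whereas the paper packages all of this into the single invariant $\bold{H}_0^{\A^1}$ and cites Asok. The paper's path is shorter and more conceptual (one universal object governs every $M$), while yours exposes the mechanism and avoids introducing $\bold{H}_0^{\A^1}$ at all; note that your final step, identifying $[X,M]_{\A^1}$ with $\Hom(\pi_0^{\A^1}(X),M)$ for $0$-truncated $\A^1$-local $M$, is the unstable analogue of the representing lemma the paper uses, so the two arguments reconverge at the end.
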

\begin{rem}{\rm Theorem \ref{thma} is just a simple application of \cite[Thm. 3.9]{A11}. Our example \ref{cor2} shows that this smooth cubic surface is $\A^1$-disconnected over non-algebraically closed fields, while \cite[Cor. 2.4.7]{AM11} asserts  that a smooth proper surface over an algebraically closed field of characteristic $0$ is $\A^1$-connected if and only if it is rational.  
}
\end{rem}
\section{Proof of \ref{thma}}
By Koll\'ar-Larsen-Lunts theorem (cf. \cite{Ko05}, \cite{LL03}), one has an isomorphism  
$$K_0(Var/k)/\bold{L} \rightarrow \Z[SB], $$
where the right hand side denotes the free abelian group generated over the set of stably birational equivalences of smooth projective varieties. So if $[X] = [Y]$ in $K_0(Var/k)/\bold{L}$, then $X$ is stably $k$-birational to $Y$. We have then $\bold{H}_0^{\A^1}(X) \cong \bold{H}_0^{\A^1}(Y)$ by \cite[Thm. 3.9]{A11}. By representing theorem \cite[Lem. 3.3]{A11}, which asserts that  
$$H^0_{Nis}(X,M) = \Hom_{\sA b^{\A^1}_k}(\bold{H}_0^{\A^1}(X),M),$$
one obtains $M(X) \cong M(Y)$. Remark that one has $M(X) = M^{nr}(X)$, if $X$ is an irreducible smooth $k$-scheme (\cite[Lem. 4.2]{A11}).  Now if $X$ is an integral smooth projective $k$-variety with $[X] \equiv \bold{1} \mod \bold{L}$ in $K_0(Var/k)$, then $X$ is stably $k$-rational. From \cite[Prop. 1.4]{CTS07} one knows that $X$ is then retract $k$-rational in sense of Saltman. By \cite[Thm. 2.3.6]{AM11} $X$ is $\A^1$-chain connected, hence $\A^1$-connected by \cite[Lem. 6.1.3]{Mor05}. Thus the theorem is proved and we see also immediately that \ref{thm1} is a special case of \ref{thma} by \cite[Prop. 4.3.8]{AM11}. 
\\ \\
\thanks{$\bold{Acknowledegements :}$ I wish to thank J.-L. Colliot-Th\'el\`ene, H. Esnault, M. Levine and O. Wittenberg for many help and support during the writing of this note. Especially my thank goes to my advisor H. Esnault for her patience to correct many errors in this note. }  
\bibliographystyle{plain}
\renewcommand\refname{References}

\end{document}